\newtheorem{lemma}{Lemma}
\newtheorem{prop}{Proposition}
\theoremstyle{definition}
\theoremstyle{remark}
\author{M.~Hellus
\and A.~Rechenauer
\and R.~Waldi}
\title{Note on a question of Wilf}
\begin{document}
\maketitle
\begin{abstract}
Let $S$ be a numerical semigroup with Frobenius number $f$, genus $g$ and embedding dimension $e$. 
In 1978 Wilf asked the question, whether $\frac{f+1-g}{f+1}\geq\frac1e$. As is well known, this holds in the cases $e=2$ and $e=3$. For $e\geq4$, we derive from results of Zhai \cite{Zhai} the following (substantially weaker) lower bound
\[\frac{f+1-g}{f+1}>\left(\frac{2N+1}{(2N+2)(e-2)}\right)^e\text{ with }\lfloor N\rfloor=104978\,.\]
To the best of our knowledge this is the first explicit lower bound for $\frac{f+1-g}{f+1}$ in terms of the embedding dimension.
\end{abstract}
A \emph{numerical semigroup} is an additively closed subset $S$ of $\mathbb Z_{\geq0}$ with $0\in S$ and only finitely many positive integers outside $S$, the so-called \emph{gaps} of $S$. The number of gaps is called the \emph{genus} $g(S)$. The set $E(S)=S^*\setminus(S^*+S^*)$, where $S^*= S\setminus\{0\}$, is the unique minimal system of generators of $S$. They are called the \emph{atoms} of $S$; their number $e(S)$ is the \emph{embedding dimension} of $S$.

From now on we assume that $S$ is different from $\mathbb Z_{\geq0}$. The largest gap $f(S)$ is called the \emph{Frobenius} number of $S$

In 1978, Wilf \cite{Wilf} asked the question: Is it always true that
\begin{equation}\label{formula_Wilf_question}\frac{f(S)+1-g(S)}{f(S)+1}\geq\frac1{e(S)}\ \text{?}\end{equation}
As is well known, this holds for $e(S)=2$, see \cite{Sylvester}, and e(S)=3, see \cite{Froeberg}. For short, we call the left hand side of (\ref{formula_Wilf_question}) the \emph{density} $d(S)$ of $S$ in the interval $[0,f(S)]$.

In the general case, Zhai \cite{Zhai} has given the following asymptotic answer to Wilf's question:

\vspace{.3cm}
\noindent\textbf{\cite[Theorem 2]{Zhai}} Given $\varepsilon>0$ real and $e\geq2$ integer, for all but finitely many numerical semigroups with $e(S)=e$ we have
\begin{equation}d(S)>\frac1e-\varepsilon\,.\end{equation}
\qed

From this, one gets immediately: For each $e\geq2$ there exists a positive real number $i(e)$ such that
\[d(S)>i(e)\text{ for all }S\text{ with }e(S)=e\,.\]
Looking on Zhai's paper more carefully, one may explicitely specify such a function. To this end we will use the auxiliary function
\[F(x,e)\coloneqq \frac1{(2x+2)e}-\left(\frac{2x+1}{(2x+2)(e-2)}\right)^e\text{, for }x\geq1\text{ real and }e\geq8\text{ integer.}\]
The following lemma is easy to see:
\begin{lemma}\label{lemma1} By Elementary Calculus we get
\begin{enumerate}
\item $F(x,8)$ is strictly decreasing with (unique) zero $N$, where $\lfloor N\rfloor=104978$. 
\end{enumerate}
Induction on $e$ shows that
\begin{enumerate}
\item[b)] $F(N,e)\geq0$ for $e\geq8$.
\end{enumerate}
\end{lemma}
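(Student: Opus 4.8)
\medskip
\noindent\textbf{Proof plan.} I would prove the two assertions separately.

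For part~(a), the first step is to simplify: since $2x+2 = 2(x+1)$, on $[1,\infty)$ we have $F(x,8) = \frac{1}{16(x+1)} - \bigl(\frac{2x+1}{12(x+1)}\bigr)^{8}$. The summand $\frac{1}{16(x+1)}$ is plainly strictly decreasing, while $\frac{2x+1}{12(x+1)} = \frac16 - \frac{1}{12(x+1)}$ is positive and strictly increasing on $[1,\infty)$, so its eighth power is strictly increasing and $-\bigl(\frac{2x+1}{12(x+1)}\bigr)^{8}$ strictly decreasing. Hence $F(\cdot,8)$ is a sum of two strictly decreasing functions and therefore strictly decreasing. Existence and uniqueness of a zero $N$ then follow from the intermediate value theorem together with the endpoint facts $F(1,8) = \frac{1}{32} - 2^{-24} > 0$ and $\lim_{x\to\infty} F(x,8) = -6^{-8} < 0$.

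To pin down $\lfloor N\rfloor$, I would simply evaluate $F(\cdot,8)$ at the consecutive integers $104978$ and $104979$ and check $F(104978,8)\ge 0 > F(104979,8)$; by strict monotonicity this forces $104978 \le N < 104979$. A crude linearization---replacing $\bigl(\frac16-\frac{1}{12(x+1)}\bigr)^{8}$ by $6^{-8}\bigl(1-\frac{4}{x+1}\bigr)$ and equating with $\frac{1}{16(x+1)}$---already places the crossover near $\frac{6^{8}}{16}\approx 1.05\times 10^{5}$, so the claimed value is no surprise; the rigorous statement, however, is a finite computation with rationals. This verification, which compares $\frac{1}{16(x+1)}$ against eighth powers of quotients of six-digit integers, is the one genuinely tedious (but entirely routine) point of the lemma.

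For part~(b), the plan is induction on $e$, with $N$ from part~(a) now fixed. Abbreviate $a \coloneqq \frac{1}{2N+2} > 0$ and $c \coloneqq \frac{2N+1}{2N+2} \in (0,1)$, so that $F(N,e) = \frac{a}{e} - \frac{c^{e}}{(e-2)^{e}}$. The base case $e=8$ reads $F(N,8) = 0 \ge 0$, which holds by the definition of $N$. For the inductive step assume $\frac{a}{e} \ge \frac{c^{e}}{(e-2)^{e}}$; multiplying by $\frac{e}{e+1} > 0$ yields $\frac{a}{e+1} \ge \frac{c^{e}}{(e-2)^{e}}\cdot\frac{e}{e+1}$, so it suffices to show $\frac{c^{e}}{(e-2)^{e}}\cdot\frac{e}{e+1} \ge \frac{c^{e+1}}{(e-1)^{e+1}}$, that is, $\frac{e}{e+1} \ge \frac{c}{e-1}\bigl(\frac{e-2}{e-1}\bigr)^{e}$. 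Since $c<1$ and $\bigl(\frac{e-2}{e-1}\bigr)^{e}<1$, the right-hand side is at most $\frac{1}{e-1}\le\frac17$, whereas the left-hand side is at least $\frac89$ for $e\ge 8$; as $\frac89 > \frac17$, the inequality holds and the induction closes. No real obstacle arises here once one decides to compare the positive and negative parts of $F(N,e)$ multiplicatively and to bound the shrinkage factor of the negative part crudely by $\frac{1}{e-1}$.
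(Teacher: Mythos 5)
Your proof is correct and follows exactly the route the paper indicates (the paper gives no details beyond the phrases ``Elementary Calculus'' for (a) and ``Induction on $e$'' for (b)): strict monotonicity of $F(\cdot,8)$ plus the intermediate value theorem and a finite rational check at the two consecutive integers for part (a), and the multiplicative comparison $\frac{e}{e+1}\ge\frac{c}{e-1}\bigl(\frac{e-2}{e-1}\bigr)^{e}$ for the inductive step in part (b). The one item you leave as a routine computation, the evaluation at $x=104978$ and $x=104979$, is indeed decidable by exact rational arithmetic, though it is worth noting that $N\approx 104978.9999$, so the inequality $F(104979,8)<0$ holds only by a very thin margin and the check must be done exactly rather than with casual floating-point precision.
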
\qed
\begin{prop}Let $S$ be a numerical semigroup of embedding dimension $e\geq4$. Let $N$ be as above, $\lfloor N\rfloor=104978$. Then
\begin{enumerate}
\item
\[d(S)>\frac{8-e}{6e}\text{ in case }4\leq e\leq7\,.\]
\item
\[d(S)>\left(\frac{2N+1}{(2N+2)(e-2)}\right)^e\text{ in case }e\geq8\,.\]
\end{enumerate}
\end{prop}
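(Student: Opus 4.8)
The plan is to turn Zhai's proof of \cite[Theorem~2]{Zhai} into an explicit statement and then feed it into Lemma~\ref{lemma1}. First I would reread that proof — it analyses a numerical semigroup block by block along the intervals $[jm,(j+1)m)$, where $m=m(S)$ is the multiplicity — keeping track of every constant; what should come out is an inequality of the shape: for every $S$ with $e(S)=e\ge4$,
\[
d(S)<\tfrac1e\quad\Longrightarrow\quad f(S)+1\le\Bigl(\tfrac{e-2}{\,1-e\,d(S)\,}\Bigr)^{\!e},
\]
equivalently $d(S)\ge\tfrac1e\bigl(1-(e-2)\,(f(S)+1)^{-1/e}\bigr)$ for all such $S$. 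Under the substitution $2x+2=\tfrac1{e\,d(S)}$ (so that $\tfrac{2x+1}{2x+2}=1-e\,d(S)$) this is exactly the assertion $\tfrac1{(2x+2)e}=d(S)\ge\bigl(\tfrac{2x+1}{(2x+2)(e-2)}\bigr)^{e}$, i.e.\ $F(x,e)\ge0$ — which is why $F$ is written with ``$2x+1$'' and ``$2x+2$''. I expect this transcription to be the real work and the main obstacle: each of Zhai's estimates must be redone with honest constants and with the dependence on $e$ kept under control.

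\emph{The case $e\ge8$.} If $d(S)>\tfrac1{4e}$ we are done, since by Lemma~\ref{lemma1}~b) the target satisfies $\bigl(\tfrac{2N+1}{(2N+2)(e-2)}\bigr)^{e}\le\tfrac1{(2N+2)e}<\tfrac1{4e}$. Otherwise $d:=d(S)\le\tfrac1{4e}$, so $x:=\tfrac12\bigl(\tfrac1{ed}-2\bigr)\ge1$, and combining the quantitative input with $d\ge\tfrac1{f(S)+1}$ yields $d\ge\bigl(\tfrac{1-ed}{e-2}\bigr)^{e}$, i.e.\ $F(x,e)\ge0$. An elementary calculation (which for $e=8$ is Lemma~\ref{lemma1}~a)) shows that $F(\cdot,e)$ is strictly decreasing, positive at $x=1$ and negative as $x\to\infty$, hence has a unique zero $N_{e}$, and $F(N,e)\ge0$ (Lemma~\ref{lemma1}~b)) forces $N\le N_{e}$. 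From $F(x,e)\ge0$ we then get $x\le N_{e}$ — in fact $x<N_{e}$, as $x=N_{e}$ would make $d$ irrational — and therefore, since $y\mapsto\bigl(\tfrac{2y+1}{(2y+2)(e-2)}\bigr)^{e}$ is increasing,
\[
d=\frac1{(2x+2)e}\;>\;\frac1{(2N_{e}+2)e}\;=\;\Bigl(\frac{2N_{e}+1}{(2N_{e}+2)(e-2)}\Bigr)^{\!e}\;\ge\;\Bigl(\frac{2N+1}{(2N+2)(e-2)}\Bigr)^{\!e}.
\]

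\emph{The case $4\le e\le7$, and the matching.} Here I would run the analogue with crossover $6^{e}$: if $d(S)\le\tfrac{8-e}{6e}$ then $1-e\,d(S)\ge\tfrac{e-2}{6}$, so the quantitative input gives $f(S)+1\le6^{e}$, a bounded range of Frobenius numbers in which Wilf's conjecture is known to hold; hence $d(S)\ge\tfrac1e>\tfrac{8-e}{6e}$, contradicting $d(S)\le\tfrac{8-e}{6e}$. The threshold $e=8$ is placed exactly so that for $e\le7$ this range is covered, whereas $\tfrac{8-e}{6e}$ degenerates to $0$ at $e=8$, where part~b) takes over; and the two bounds agree there, since $F(N,8)=0$ reads $\tfrac1{(2N+2)\cdot8}=\bigl(\tfrac{2N+1}{(2N+2)\cdot6}\bigr)^{8}$.
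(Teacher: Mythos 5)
Your ``quantitative input'' is exactly right, and --- contrary to your worry that transcribing Zhai's proof of his Theorem~2 with explicit constants would be ``the real work'' --- it requires no reworking of that proof at all: it is the concatenation of two results Zhai states explicitly and which the paper uses as black boxes, namely \cite[Theorem 1]{Zhai}, $d(S)\ge\frac1e-\frac{(m-1)(e-2)}{2e(f+1)}$ (whence $\frac m{f+1}>\frac{2(1-e\,d)}{e-2}$), and \cite[Lemma 4]{Zhai}, $\frac m{f+1}>\delta\Rightarrow f+1\le(2/\delta)^e$. Granting this, your treatment of $e\ge8$ is sound in substance and is a genuine reorganization of the paper's argument (its Proposition~\ref{prop2}): the paper fixes $\delta=\frac{2N+1}{(N+1)(e-2)}$ once and for all and splits $\Sigma(e)$ into the semigroups with $\frac m{f+1}\le\delta$ (where Theorem~1 alone gives $d>\frac1{(2N+2)e}$) and the finitely many with $\frac m{f+1}>\delta$ (where $f+1\le(2/\delta)^e$ together with $d\ge\frac2{f+1}$, or $d=\frac1e$ when $f+1-g=1$, finishes); you instead let the threshold float with $d(S)$ itself, obtain the self-referential inequality $F(x,e)\ge0$, and invoke monotonicity of $F(\cdot,e)$ --- which you must (and easily can, as $F$ is a sum of two decreasing functions of $x$) verify for all $e\ge8$, not just $e=8$ as in Lemma~\ref{lemma1}. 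The one real defect here is strictness: you rule out $x=N_e$ by asserting $d$ would then be irrational, but $N_e$ is a root of a polynomial with integer coefficients and you give no reason why it cannot be rational; this matters precisely at $e=8$, where $N_e=N$ and equality would defeat the claimed strict bound. The clean fix is the paper's: if $f+1-g=1$ then $f+1=e$ and $d=\frac1e>\frac1{4e}$, a contradiction, so $d\ge\frac2{f+1}>\frac1{f+1}$ and hence $F(x,e)>0$ strictly.

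Part a) is where the proposal genuinely breaks down. You reduce $4\le e\le7$ to the assertion that Wilf's inequality holds for every $S$ with $f(S)+1\le6^e$; that is not known. Wilf's question is open already in embedding dimension $4$, and the available verifications (computational ones for small genus, Eliahou's theorem for $f+1\le3m$, and so on) do not cover all semigroups with $f+1\le6^4=1296$, let alone $6^7$. The paper's proof of a) avoids any such finite check: by Eliahou \cite[Proposition 3.13 and Corollary 6.5]{Eliahou}, Wilf holds whenever $f+1\le3m$, in which case $d\ge\frac1e>\frac{8-e}{6e}$; otherwise $m-1<m<\frac{f+1}3$, and substituting this into \cite[Theorem 1]{Zhai} gives $d>\frac1e-\frac{e-2}{6e}=\frac{8-e}{6e}$ directly. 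So for a) you need an unconditional input of Eliahou's type, not an appeal to known cases of Wilf in a range of size $6^e$; as written, your argument for $4\le e\le7$ is not a proof.
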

\begin{proof} Let $m(S)$ the smallest atom, called the \emph{multiplicity} of $S$.
\begin{enumerate}
\item According to \cite[Theorem 1]{Zhai} we have
\begin{equation}\label{drei}d(S)\geq\frac1e-\frac{(m(S)-1)(e-2)}{(f(S)+1)\cdot2e}\,.\end{equation}
Since, according to Eliahou \cite[Proposition 3.13 and Corollary 6.5]{Eliahou}, Wilf holds in case $f(S)+1\leq3m(S)$, as well as $\frac1e>\frac{8-e}{6e}$ for $e\geq4$, we may assume that $m(S)<\frac{f(S)+1}3$. Substituting $m(S)-1$ by $\frac{f(S)+1}3$ in (\ref{drei}), we get the desired inequality.
\end{enumerate}
\end{proof}
As for b), more generally we shall show
\begin{prop}\label{prop2}Given an integer $e\geq8$, let $\Sigma(e)$ be the set of all numerical semigroups with embedding dimension $e$. Let $N$ be the (unique) zero of $F(x,8)$, $\lfloor N\rfloor=104978$. Then
\begin{enumerate}
\item For all but finitely many $S$ from $\Sigma(e)$ we have
\[d(S)>\frac1{(2N+2)\cdot e}\geq\left(\frac{2N+1}{(2N+2)(e-2)}\right)^e\,.\]

\item For all $S$ from $\Sigma(e)$, anyhow we have
\[d(S)>\left(\frac{2N+1}{(2N+2)(e-2)}\right)^e\,.\]
\end{enumerate}
\end{prop}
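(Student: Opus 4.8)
\medskip
\noindent The plan is to prove a) and b) by quite different means. For a), I would apply \cite[Theorem~2]{Zhai} with the tailored value $\varepsilon:=\frac{2N+1}{(2N+2)e}$: since $\frac1e-\varepsilon=\frac1{(2N+2)e}$, this already gives $d(S)>\frac1{(2N+2)e}$ for all but finitely many $S\in\Sigma(e)$, while the second inequality of a) is nothing but $F(N,e)\ge0$, i.e.\ Lemma~\ref{lemma1}~b). Setting $\mathcal E(e):=\{S\in\Sigma(e):d(S)\le\frac1{(2N+2)e}\}$, part a) says that this set is finite, and for $S\notin\mathcal E(e)$ part b) follows from a) together with the same inequality; so the whole difficulty of b) is to bound $d(S)$ \emph{effectively from below} for the finitely many $S\in\mathcal E(e)$.

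To control $\mathcal E(e)$ I would first feed its defining inequality into Zhai's Theorem~1, i.e.\ (\ref{drei}): $d(S)\le\frac1{(2N+2)e}$ forces
\[\frac{(m(S)-1)(e-2)}{(f(S)+1)\cdot2e}\ \ge\ \frac1e-\frac1{(2N+2)e}\ =\ \frac{2N+1}{(2N+2)e}\,,\]
hence $f(S)+1\le C(e)\,(m(S)-1)$ with $C(e):=\frac{e-2}{2}\cdot\frac{2N+2}{2N+1}<e-2$. Now a counting argument on the Apéry set bounds the multiplicity $m=m(S)$ in terms of $e$ alone. Indeed $\max\operatorname{Ap}(S,m)=f(S)+m<(C(e)+1)m$, and writing the atoms as $a_1=m<a_2<\dots<a_e$, every nonzero $w\in\operatorname{Ap}(S,m)$ has a representation $w=\sum_{k=2}^e c_k a_k$ not involving $a_1$ (otherwise $w-m$ would be a smaller element of $S$ in the residue class of $w$); since $a_k\ge m+1$ for $k\ge2$, this gives $(m+1)\sum_k c_k\le w<(C(e)+1)m$, so $\sum_k c_k\le\lfloor C(e)\rfloor+1=:T(e)$. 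Distinct exponent tuples give distinct values, so the $m-1$ nonzero Apéry elements inject into the set of monomials of degree $\le T(e)$ in $e-1$ variables, whence $m\le\binom{T(e)+e-1}{e-1}=:M(e)$ and $f(S)+1\le C(e)\,(M(e)-1)$ for every $S\in\mathcal E(e)$.

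It then remains to finish with the trivial estimate $d(S)\ge\frac1{f(S)+1}$ (valid because $f(S)+1-g(S)=|S\cap[0,f(S)]|\ge1$): every $S\in\mathcal E(e)$ obeys $d(S)\ge\frac1{C(e)\,(M(e)-1)}$, so it suffices to verify
\[C(e)\,M(e)\ <\ \left(\frac{(2N+2)(e-2)}{2N+1}\right)^e\qquad(e\ge8)\,.\]
From $C(e)<e-2$ one gets $T(e)\le e-2$, hence $M(e)\le2^{\,T(e)+e-1}<4^{\,e-1}$, so the left-hand side is smaller than $(e-2)\,4^{e-1}$, while the right-hand side is at least $(e-2)^e\ge6^e$; since $(e-2)\,4^{e-1}<6^e$ for all $e\ge8$, the inequality holds. (For $e=8$, concretely, $C(8)$ is just above $3$, $T(8)=4$, $M(8)=\binom{11}{7}=330$, and $C(8)\,(M(8)-1)<1000\ll6^8$.)

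I expect the genuinely delicate part to be this endgame of b): extracting a clean closed-form $M(e)$ from the Apéry-set count---or, equivalently, reading such a bound directly off the proof of \cite[Theorem~2]{Zhai}---and then checking the final numerical inequality in the tight range of small $e$. A convenient shortcut at $e=8$ is that $M(8)$ is so small that every $S\in\mathcal E(8)$ in fact has $f(S)+1\le3\,m(S)$, hence satisfies Wilf's inequality outright by Eliahou \cite[Corollary~6.5]{Eliahou}, giving $d(S)\ge\frac18>\left(\frac{2N+1}{12(N+1)}\right)^8$; and one could altogether bypass \cite[Theorem~2]{Zhai} in a), deriving its finiteness statement from Theorem~1 together with the same Apéry-set count.
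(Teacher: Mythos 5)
Your proof is correct, but for part b) it takes a genuinely different route from the paper's. The paper picks $\delta=\frac{2N+1}{(N+1)(e-2)}$ in Zhai's Lemma~4 (Lemma~\ref{lemma2} here): part a) follows from Lemma~\ref{lemma2}\,b) combined with (\ref{drei}), and for the finitely many exceptional $S$ with $\frac{m(S)}{f(S)+1}>\delta$ part b) follows from the ready-made bound $f(S)+1\leq(2/\delta)^e$ of Lemma~\ref{lemma2}\,a), whose constant is calibrated so that $\frac1{f(S)+1}\geq(\delta/2)^e$ is \emph{exactly} the claimed bound (at the price of a separate case $f(S)+1-g(S)=1$ to keep the inequality strict). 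You instead define the exceptional set by failure of the density bound itself, extract $f(S)+1\leq C(e)(m(S)-1)$ from (\ref{drei}), and then re-derive the substance of Lemma~\ref{lemma2} by hand via the Ap\'ery-set count $m(S)\leq\binom{T(e)+e-1}{e-1}$ --- essentially Zhai's own proof of his Lemma~4, sharpened by the extra information from his Theorem~1. What this buys is a far stronger effective bound $f(S)+1<(e-2)4^{e-1}$ on the exceptional semigroups (versus roughly $(e-2)^e$ from Lemma~\ref{lemma2}\,a)), which leaves ample slack in the closing inequality, makes the case $f(S)+1-g(S)=1$ unnecessary, and yields the nice observation that for $e=8$ the exceptional semigroups satisfy $f(S)+1\leq 3m(S)$ and hence Wilf outright by Eliahou; the cost is the extra numerical verification at the end, which you carry out correctly. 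One logical slip to fix: ``distinct exponent tuples give distinct values'' is false in general (relations among the atoms) and is not what you need --- the injection from nonzero Ap\'ery elements to exponent tuples holds because a single tuple determines a single value, so distinct elements must be assigned distinct tuples; note also that these tuples have degree at least $1$, which is precisely what turns the count of $m(S)-1$ elements into $m(S)\leq\binom{T(e)+e-1}{e-1}$ rather than $m(S)\leq\binom{T(e)+e-1}{e-1}+1$.
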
\qed

\noindent\textbf{Preliminary consideration.} In \cite[Lemma 4 and its proof]{Zhai} it is shown:

\vspace{.3cm}
\begin{lemma}\label{lemma2} (Zhai) Given $\delta>0$ arbitrarily, for $S$ from $\Sigma(e)$ we have:

\begin{enumerate}
\item If $\frac{m(S)}{f(S)+1}>\delta$, then $f(S)+1\leq\left(\frac2\delta\right)^e$.
\item In $\Sigma(e)$ there are only finitely many $S$ such that $\frac{m(S)}{f(S)+1}>\delta$.
\end{enumerate}
\end{lemma}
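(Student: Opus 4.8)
The plan is to deduce part~(b) from part~(a) and then prove~(a) by a counting argument on an Apéry set. Part~(b) is immediate once~(a) is known: if $S\in\Sigma(e)$ satisfies $m(S)/(f(S)+1)>\delta$, then by~(a) we have $f(S)+1\le(2/\delta)^{e}$, and there are only finitely many numerical semigroups with a prescribed Frobenius number, hence only finitely many such $S$. For~(a) I may assume $\delta<1$, because $m(S)\le f(S)+1$ always, so for $\delta\ge1$ the hypothesis is vacuous. Writing $m=m(S)$ and $f=f(S)$, the hypothesis becomes $f+1<m/\delta$, so it suffices to establish the a~priori bound $m\le 2^{e}\delta^{1-e}$, which then gives $f+1<m/\delta\le 2^{e}\delta^{-e}=(2/\delta)^{e}$.

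To bound $m$ I would use the Apéry set $\operatorname{Ap}(S,m)$, whose $m-1$ nonzero elements are precisely the least elements of $S$ in the nonzero residue classes modulo $m$. The crucial observation is that no factorization into atoms of such an element $w$ can involve the atom $m$, since otherwise $w-m$ would be a strictly smaller element of $S$ in the same class; hence $w$ is a sum of atoms taken from the remaining $e-1$ atoms, all of which are $\ge m+1$. Therefore, if $w\le km$, then $w$ is a sum of at most $k-1$ of these atoms, so $w$ lies in a set of at most $\binom{k+e-2}{e-1}-1$ values. Choosing $k=\lceil f/m\rceil+1$, so that $km\ge f+m=\max\operatorname{Ap}(S,m)$, all $m-1$ nonzero Apéry elements are $\le km$; this forces $m-1\le\binom{k+e-2}{e-1}-1$, that is, $m\le\binom{k+e-2}{e-1}$. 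This observation is the step I expect to matter most: a crude bound on the number of elements of $S$ below $km$ (counting sums of at most $k$ of the $e$ atoms) is weaker by a factor growing like $(f+1)^{1/e}$ and does not close the argument, whereas counting sums of at most $k-1$ of the $e-1$ atoms exceeding $m$ is just strong enough.

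It remains to feed in the hypothesis. From $f/m<1/\delta$ we get $k\le\lceil1/\delta\rceil+1$, so setting $q\coloneqq\lceil1/\delta\rceil$ (note $q\ge2$, as $\delta<1$) we obtain $m\le\binom{q+e-1}{e-1}$. A short elementary estimate — treating $q=2$ (where $\binom{e+1}{e-1}=\tfrac{e(e+1)}{2}\le 2^{e}$), then $3\le q\le e-1$ (where $\binom{q+e-1}{e-1}\le\binom{2e-2}{e-1}\le 2^{2e-2}<2^{e}(q-1)^{e-1}$), and finally $q\ge e$ (where $\binom{q+e-1}{e-1}\le\tfrac{(2q)^{e-1}}{(e-1)!}\le 2^{e}(q-1)^{e-1}$, using $2^{e-2}\le(e-1)!$ and $q-1\ge q/2$) — yields $m\le 2^{e}(q-1)^{e-1}$; since $q-1<1/\delta$ this gives $m<2^{e}\delta^{1-e}$, the required a~priori bound. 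The argument as a whole follows \cite[Lemma~4 and its proof]{Zhai}.
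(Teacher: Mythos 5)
Your argument is correct; I checked each step. Part (b) does follow from (a) since a numerical semigroup is determined by its (finitely many) gaps, all of which are at most $f(S)$. In (a), the reduction to $\delta<1$, the fact that a nonzero Ap\'ery element $w\in\operatorname{Ap}(S,m)$ admits no factorization using the atom $m$ and hence is a sum of at most $k-1$ atoms each $\geq m+1$ whenever $w\leq km$, the choice $k=\lceil f/m\rceil+1\leq\lceil 1/\delta\rceil+1$, and the resulting bound $m\leq\binom{q+e-1}{e-1}$ are all sound; I also verified the three-case estimate $\binom{q+e-1}{e-1}\leq 2^e(q-1)^{e-1}$ (the inequalities $e(e+1)/2\leq 2^e$, $2^{e-2}\leq 2^{e-1}\leq(q-1)^{e-1}$ for $q\geq3$, and $2^{e-2}\leq(e-1)!$ all hold for $e\geq2$), which gives $m<2^e\delta^{1-e}$ and hence $f+1<m/\delta\leq(2/\delta)^e$. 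Note, however, that the paper supplies no proof of this lemma at all --- it is quoted verbatim from Zhai's Lemma 4 with a reference --- and your route is not quite the one in Zhai's paper: Zhai bounds $f+1$ directly by observing that the $f+1$ integers in $[f+1,2f+1]$ all lie in $S$ and, under the hypothesis, are each sums of fewer than $2(f+1)/m<2/\delta$ generators, so $f+1$ is at most the number of such short sums. Your version instead bounds the multiplicity $m$ first, via the sharper observation that Ap\'ery elements avoid the atom $m$ in every factorization, and then converts this into a bound on $f+1$ through the hypothesis $f+1<m/\delta$. Both are counting arguments of the same flavour; yours is somewhat more delicate (as you note, the crude count of sums of at most $k$ of all $e$ atoms would not close your version of the argument, whereas it is exactly what Zhai's direct version needs), but it has the merit of being fully self-contained and of yielding the stated constant $(2/\delta)^e$ with a strict inequality.
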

\qed

\noindent\textit{Proof of Proposition \ref{prop2}, with the help of Lemma \ref{lemma2}.} In Lemma \ref{lemma2}, choose $\delta=\frac{2N+1}{(N+1)(e-2)}$.
\begin{enumerate}
\item According to Lemma \ref{lemma2}\,b), $\frac{m(S)}{f(S)+1}\leq\delta$ for almost all $S$ from $\Sigma(e)$.

Finally, under the assumption $\frac{m(S)}{f(S)+1}\leq\delta$, by \cite[Theorem 1]{Zhai} we get
\[d(S)\geq\frac1e-\frac{(m(S)-1)(e-2)}{(f(S)+1)2e}>\frac1e-\frac{m(S)(e-2)}{(f(S)+1)2e}\geq\frac1e-\delta\cdot\frac{e-2}{2e}=\frac1{(2N+2)e}\,.\]
The second claimed inequality follows from Lemma \ref{lemma1}\,b).
\item Now let $\frac{m(S)}{f(S)+1}>\delta$ (which, by Lemma \ref{lemma2}\,b), happens only finitely many times).

In case $f(S)+1-g(S)=1$ we have $f(S)+1=e$, hence
\[d(S)=\frac1{f(S)+1}=\frac1e>\frac1{(2N+2)e}\geq\left(\frac{2N+1}{(2N+2)(e-2)}\right)^e\text{\,, by Lemma \ref{lemma1}\,b).}\]
Finally, let $\frac{m(S)}{f(S)+1}>\delta$ and $f(S)+1-g(S)>1$. By Lemma \ref{lemma2}\,a) we get
\[d(S)>\frac1{f(S)+1}\geq\left(\frac\delta2\right)^e =\left(\frac{2N+1}{(2N+2)(e-2)}\right)^e\,.\]
\end{enumerate}
\qed

\noindent\textbf{Side note}. By Lemma \ref{lemma1}\,.a), of all possible values for which the above proof works, our choice of $N$ leads to the best estimation for $d(S)$.

\end{document}